\newtheorem{theorem}{Theorem}[section]
\title{A note on the theoretical approach to Grassmannians and Plücker coordinates for additive skew-symmetric pairwise comparisons matrices}
\author{Waldemar W. Koczkodaj, Witold Pedrycz, Alexander Pigazzini, \\ Laura P. Pigazzini and Richard Pincak}
\date{January 2025}
\begin{document}

\maketitle

\begin{abstract} 
\noindent Symmetry and antisymmetry are fundamental concepts in many strict sciences. Pairwise comparisons (PC) matrices are fundamental tools for representing pairwise relations in decision making. In this theoretical study, we present a novel framework that embeds additive skew-symmetric PC matrices into the Grassmannian manifold \( G(2, n) \). This framework leverages Plücker coordinates to provide a rigorous geometric interpretation of their structure.
\\
Our key result demonstrates that the algebraic consistency condition \( a_{ij} + a_{jk} - a_{ki} = 0 \) is equivalent to the geometric consistency of 2-planes in \( G(2, n) \), satisfying the Plücker relations. This connection reveals that the algebraic properties of PC matrices can be naturally understood through their geometric representation.  
Additionally, we extend this framework by interpreting PC matrices as differential 2-forms, providing a new perspective on their consistency as a closedness condition. Our framework of linear algebra, differential geometry, and algebraic geometry, placing PC matrices in a broader mathematical context.  
Rather than proposing a practical alternative to existing methods, our study aims to offer a theoretical foundation for future research by exploring new insights into higher-dimensional geometry and the geometric modeling of pairwise comparisons.
\end{abstract}

\noindent \textbf{keyword:}Pairwise comparison, inconsistency, skew-symmetric; Grassmannian; Plücker coordinates; differential 2-forms.
\\
\\
\textbf{Mathematics Subject Classification 2020:} 53B50, 53Z50.

\section{Introduction} 
Symmetry and antisymmetry are fundamental concepts in many strict sciences. Pairwise comparisons (PC) matrices have emerged as fundamental tools in decision-making and beyond, offering a structured means to represent and analyze pairwise relationships. By translating subjective judgments into matrix form, PC matrices facilitate the extraction of priorities and the study of relational structures. \\
A fundamental aspect of PC matrices is the consistency condition. Historically, consistency was first introduced in a multiplicative form, expressed as \( m_{ij} \cdot m_{jk} = m_{ik} \). Later, through a logarithmic transformation, it was reinterpreted in an additive form, \( a_{ij} + a_{jk} - a_{ik} = 0 \). Conversely, the additive form can be transformed back into the multiplicative form by applying the exponential function, \( m_{ij} = e^{a_{ij}} \).
\\
n fact, let us consider the following additive skew-symmetric pairwise comparisons matrix:
\[
A = \begin{bmatrix}
0 & 1 & 0 \\
-1 & 0 & -1 \\
0 & 1 & 0 \\
\end{bmatrix}.
\]
\noindent This matrix is consistent because, for any triad \((i, j, k)\), the condition \(a_{ij} + a_{jk} - a_{ik} = 0\) holds.
For example, the triad \((1, 0, -1)\) satisfies \(1 + (-1) \)= 0.
\\
The transposed matrix preserves the consistency.
\\
The equivalent multiplicative form of this matrix is:

\[
M = \begin{bmatrix}
1 & e & 1 \\
1/e & 1 & 1/e \\
1 & e & 1 \\
\end{bmatrix}.
\]

\noindent Here, the multiplicative consistency condition \(m_{ij} \cdot m_{jk} = m_{ik}\) is satisfied in the same way.
\\
\\
Traditional approaches to PC matrices rely on optimization and eigenvalue-based techniques, to evaluate their properties and derive meaningful outcomes. While effective, these methods often obscure the deeper geometric and structural relationships inherent to PC matrices. To address this, our study develops a novel theoretical framework that bridges the algebraic and geometric perspectives of PC matrices.  
\\
This paper focuses on additive skew-symmetric PC matrices, a class of matrices characterized by their simplicity and direct interpretability. By embedding these matrices into the Grassmannian manifold \( G(2, n) \), which represents the space of all 2-dimensional subspaces of \( \mathbb{R}^n \), we uncover a natural geometric representation of the consistency condition \( a_{ij} + a_{jk} - a_{ki} = 0 \). Specifically, we demonstrate that this algebraic condition is equivalent to the geometric consistency of the associated subspaces in \( G(2, n) \), satisfying the Plücker relations.  
\\
The key contributions of this work are as follows: 
\begin{itemize}

\item Geometric embedding: 
We provide a rigorous mathematical framework for representing additive skew-symmetric PC matrices as elements of the Grassmannian \( G(2, n) \).  

\item Linking consistency and geometry: 
We establish that the algebraic consistency of PC matrices corresponds directly to the geometric properties of 2-dimensional subspaces, as encoded by Plücker coordinates. 

\item Interdisciplinary insights: 
By connecting PC matrices with Grassmannians, our work bridges linear algebra, differential geometry, and algebraic geometry, offering new perspectives for the analysis of pairwise comparisonss.  
\end{itemize}

\noindent The intricate relationships between Grassmannians and algebraic or geometric structures have been the subject of extensive research. Pankov et al., in \cite{Pankov2006}, explored the geometry of polar Grassmann spaces. In \cite{Liu}, the transformations that preserve the intersection relations are analyzed, while in \cite{Pankov2014}, isometric embeddings of Grassmann graphs are studied. Prazmowska et al., in \cite{Prazmowska2010}, examined the Grassmann spaces of regular subspaces. Additional studies have examined the Euclidean geometry of orthogonality in subspaces (\cite{Prazmowska2008}), and provided an axiomatic description of Strambach planes (\cite{Prazmowski1989}). Pankov, in \cite{Pankov2018}, investigated the geometric version of Wigner's theorem in the context of Hilbert Grassmannians. Zynel, in \cite{Zynel2014}, explored complements of Grassmann substructures in projective Grassmannians. Govc et al., in \cite{Govc}, investigated the minimal triangulations of Grassmannians, providing a combinatorial perspective essential for understanding their topological complexity.
\\
In parallel, in the context of pairwise comparison (PC) matrices, advances have focused on new definitions of consistency (\cite{K1993}), statistically accurate improvements in error rates (\cite{K1996}), effective methods for inconsistency reduction (\cite{KS2016}), the development of inconsistency indicator maps on groups (\cite{KSW2016}), and ordinal consistency indicators for PC matrices (\cite{Kuo2021}). Recent work has also highlighted the societal relevance of PC matrices, such as their use in modeling electronic health record breaches (\cite{SIR2019}). Building on these diverse contributions, this paper bridges Grassmannian geometry and PC matrices by embedding additive skew-symmetric PC matrices into the Grassmannian \( G(2, n) \), offering new perspectives on their geometric consistency and algebraic properties.
\\
Recent advancements in the study of Grassmannians and related algebraic structures have opened new avenues for mathematical modeling and theoretical insights. In \cite{Balinsky2024}, the authors analyzed quantum deformations of associative Sato Grassmannian algebras and identified their relation to bi-Hamiltonian flows on smooth functional submanifolds, providing a foundation for understanding Frobenius manifolds in topological field theory. Similarly, \cite{Yamawaki2023} explored the role of rho mesons as dynamical gauge bosons within the Grassmannian model of hidden local symmetry, emphasizing large \(N\)-dynamics as a dual to QCD. In \cite{Calixto2022}, the authors investigated the Hilbert space structure of low-energy sectors in \(U(N)\) quantum Hall ferromagnets, demonstrating a connection to Grassmannian phase spaces. Further, the work in \cite{Fioresi2021}, extended this paradigm to quantum supertwistors, embedding super-Minkowski spaces into Grassmannians, while Planat et al., in \cite{Planat2020}, explored exotic space-times linked to Grassmannians for quantum computation and measurement frameworks. Lastly, Jung et al., in \cite{Jung2019}, characterized spherical submanifolds, including the Clifford torus and great sphere, through Grassmannian-derived spherical Gauss maps. These interdisciplinary contributions highlight the versatility of Grassmannian frameworks. In the same way, our work aims at the growth of this research area that connects Grassmannians to diverse mathematical and physical frameworks. By embedding additive skew-symmetric pairwise comparison matrices into the Grassmannian \( G(2, n) \), we provide a novel perspective that bridges linear algebra, differential geometry, and algebraic geometry.
\\
This work does not aim to introduce a practical approach alternative to traditional methods for analyzing pairwise comparison matrices, but aims to place them in a new mathematical context, building a bridge between PC matrix theory and algebraic, differential and multilinear geometry. In this way, we explore new theoretical perspectives that broaden their scope of application and study.

\section{Mathematical foundations} 
 
A pairwise comparisons (PC) matrix \( A = [a_{ij}] \) is an \( n \times n \) matrix where \( a_{ij} \) represents the relative importance of alternative \( i \) over \( j \). 
\\
\\
It has the following properties:

\begin{itemize}

\item Multiplicative form: \( a_{ij} > 0 \), \( a_{ij} \cdot a_{ji} = 1 \).  

\item Additive form: \( a_{ij} + a_{ji} = 0 \), leading to a skew-symmetric matrix:  
\[
A^T = -A.  
\]  
\end{itemize}

A matrix is consistent if:  
\[
a_{ij} + a_{jk} - a_{ki} = 0 \quad \text{(additive form)}.  
\]  
\noindent Consistency in assessments is illustrated by Figure \ref{fig1}

\begin{figure}
    \centering
    \includegraphics[width=0.7\linewidth]{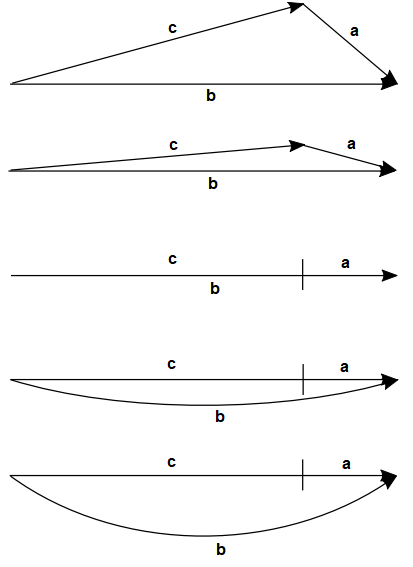}
    \caption{Inconsistency in additive pairwise comparisons}
    \label{fig1}
\end{figure}

\newpage

\subsection{Skew-symmetric matrices}

A skew-symmetric matrix \( A \in \mathbb{R}^{n \times n} \) satisfies \( A^T = -A \). Key properties include: 
\begin{itemize}

\item All diagonal elements are zero (\( a_{ii} = 0 \)).  
\item The eigenvalues of \( A \) are purely imaginary or zero.  
\item \( A \) defines a bilinear form \( B(\mathbf{u}, \mathbf{v}) = \mathbf{u}^T A \mathbf{v} \), which is antisymmetric.  
\end{itemize}

\subsection{Grassmannians and geometric representation of PC matrices}  
The Grassmannian \( G(k, n) \) is the manifold representing all \( k \)-dimensional subspaces of the Euclidean space \( \mathbb{R}^n \). In this work, we focus on \( G(2, n) \), which describes the set of all 2-dimensional planes passing through the origin in \( \mathbb{R}^n \). The use of \( G(2, n) \) is motivated by the geometric nature of additive Pairwise comparisons PC matrices, which can naturally be interpreted in terms of 2-dimensional subspaces.

\subsection*{Fundamental concepts}
Each 2-dimensional subspace in \( \mathbb{R}^n \) is "spanned" by a pair of linearly independent vectors. However, this representation using spanning vectors is not unique, as the choice of basis vectors for a subspace can vary. To overcome this ambiguity, we use the wedge product (or exterior product), which provides a unique representation of the subspace.
\\
Given two vectors \( v_i, v_j \in \mathbb{R}^n \), their wedge product \( v_i \wedge v_j \) is an element of an associated exterior space, capturing all the information about the plane defined by \( v_i \) and \( v_j \). Specifically, the wedge product can be interpreted as an oriented measure of the 2-dimensional subspace spanned by \( v_i \) and \( v_j \).
\\
The Grassmannian \( G(2, n) \) can therefore be viewed as a geometric manifold that parametrizes these 2-dimensional subspaces, making it an ideal tool to represent pairwise relationships. For a detailed description of \( G(2, n) \), with related geometric tools, we recommend \cite{Tu}.

\subsection*{Plücker coordinates}

We now introduce the concept of Plücker coordinates. This tool will be needed to embed the subspaces of the Grassmannian into a higher-dimensional Euclidean space.
\\
Plücker coordinates are an algebraic representation that describes \( k \)-dimensional subspaces in \( \mathbb{R}^n \) space. For the case \( k = 2 \), they describe two-dimensional planes spaced by two linearly independent vectors \( v_i, v_j \in \mathbb{R}^n \).

If \( v_i = (v_{i,1}, v_{i,2}, \ldots, v_{i,n}) \) and \( v_j = (v_{j,1}, v_{j,2 }, \ldots, v_{j,n}) \), the Plücker coordinates \( p_{kl} \) are given by the formula:
\[
p_{kl} = v_{i,k} v_{j,l} - v_{i,l} v_{j,k}, \quad \text{for } 1 \leq k < l \leq n.
\]

\noindent Each pair \( (k, l) \) represents a contribution to the two-dimensional plane generated by the vectors \( v_i \) and \( v_j \). Then, \( p_{kl} \) measures the oriented area of the parallelogram projected onto the plane determined by the axes \( e_k \) and \( e_l \).
\\
These coordinates capture the essence of the subspace generated by \( v_i \) and \( v_j \) and are scale invariant of the vectors, that is:
\[
p_{kl} \text{ remains unchanged if we multiply } v_i \text{ or } v_j \text{ by a constant factor.}
\]
\noindent Plücker coordinates and their quadratic relations are discussed in depth in \cite{Harris}.

\subsubsection*{Relation to the wedge product}
The wedge product \( v_i \wedge v_j \) is a linear combination of the bases \( e_k \wedge e_l \) of the outer space \( \bigwedge^2 \mathbb{R}^n \), with coefficients equal to \( p_{kl} \). Mathematically:
\[
v_i \wedge v_j = \sum_{1 \leq k < l \leq n} p_{kl} \, e_k \wedge e_l.
\]

\noindent The Plücker coordinates \( p_{kl} \) are therefore the coefficients that represent the wedge product in the canonical basis of \( \bigwedge^2 \mathbb{R}^n \).
\\
\\
To understand better, we propose the following simple example.
\\
Let's consider \( v_i = (1, 0, 0) \) and \( v_j = (0, 1, 0) \) in \( \mathbb{R}^3 \). Let's calculate the Plücker coordinates \( \{p_{12}, p_{13}, p_{23}\} \):
\begin{itemize}
\item  \( p_{12} = v_{i,1} v_{j,2} - v_{i,2} v_{j,1} = (1)(1) - (0)(0) = 1 \),

\item \( p_{13} = v_{i,1} v_{j,3} - v_{i,3} v_{j,1} = (1)(0) - (0)(0) = 0 \),

\item \( p_{23} = v_{i,2} v_{j,3} - v_{i,3} v_{j,2} = (0)(0) - (0)(1) = 0 \).
\end{itemize}

\noindent The Plücker coordinates are then:
\[
p_{12} = 1, \quad p_{13} = 0, \quad p_{23} = 0.
\]

\noindent These values indicate that the plane generated by \( v_i \) and \( v_j \) is the one in the \( x \) and \( y \) axes, with no projection onto the plane containing the \( z \) axis.
\\
\\
Plücker coordinates satisfy the following properties:
\begin{itemize}
    
\item Homogeneity: Plücker coordinates are proportional to the scale of the vectors. For example, multiplying \( v_i \) or \( v_j \) by a constant \( c \) results in the \( p_{kl} \) coordinates being multiplied by \( c^2 \).

\item Compatibility condition: The Plücker coordinates satisfy a series of quadratic relations (called Plücker Relations), which guarantee that they truly represent a two-dimensional subspace.

\end{itemize}

\subsection*{Plücker embedding}
To work computationally with the Grassmannian \( G(2, n) \), it is helpful to embed its subspaces into a higher-dimensional Euclidean space, \( \mathbb{R}^{\binom{n}{2}} \). This embedding, known as the Plücker embedding, uses Plücker coordinates to represent subspaces while preserving their geometric relationships.
\\
\\
For \( G(2, n) \):
\\
\\
- Each 2-dimensional subspace is associated with a vector \( p_{ij} \in \mathbb{R}^{\binom{n}{2}} \), whose components are determined by the linear combinations of the spanning vectors.
\\
- These coordinates allow the measurement of distances, angles, and geometric deviations between subspaces in a rigorous and computationally efficient manner.

\subsection*{Motivation for choosing \( G(2, n) \)}
The adoption of the Grassmannian \( G(2, n) \) for representing additive PCMs is motivated by several factors:

\begin{itemize}

\item Natural Relationship with Skew-Symmetric Matrices: The components of an additive PC matrix can be interpreted as geometric descriptors of the planes generated by pairs of alternatives. These planes naturally fit into the structure of \( G(2, n) \).

\item Geometric Intuition: Inconsistency in a PC matrix manifests as a deviation of the subspaces represented by \( p_{ij} \) from a common hyperplane in \( \mathbb{R}^{\binom{n}{2}} \). This deviation is easily quantifiable using the geometric framework of \( G(2, n) \).

\item Computational Efficiency: The Plücker embedding transforms complex geometric problems into linear or quadratic operations in Euclidean space, simplifying the computations of projections and corrections.

\item Generality: Although this work focuses on \( G(2, n) \), the approach can be extended to higher-dimensional Grassmannians to address more complex or multidimensional problems.

\end{itemize}

\subsection*{Representing additive PC matrices via \( G(2, n) \)}
For a PC matrix \( A = [a_{ij}] \), each element \( a_{ij} \) can be interpreted as a measure of the pairwise comparisons between alternatives \( i \) and \( j \). Within the proposed framework:
\\
\\
- Each alternative \( i \) is associated with a vector \( v_i \in \mathbb{R}^n \).
\\
- The comparisons between \( i \) and \( j \) generates a 2-dimensional subspace represented by \( w_{ij} = v_i \wedge v_j \in G(2, n) \).
\\
- The Plücker coordinates \( p_{ij} \) describe this subspace in \( \mathbb{R}^{\binom{n}{2}} \), enabling the geometric analysis of matrix inconsistency.
\\
\\
In summary, the Grassmannian \( G(2, n) \) provides a rigorous and intuitive framework for representing and correcting additive PCMs by leveraging the geometry of manifolds to quantify and reduce pairwise comparisons inconsistencies.

\section{Consistency of a skew-symmetric additive PC matrix and Plücker coordinates in the Grassmannian \( G(2, n) \)}

\begin{theorem}
Let \( A = [a_{ij}] \) be a skew-symmetric additive PC matrix of order \( n \), defined by \( a_{ij} = s_i - s_j \), where \( s_i = \|v_i\|^2 \in \mathbb{R} \) represents a scalar measure associated with option \( i \). There exists a set of vectors \( v_i = b_i e_i \), where \( \{e_i\}_{i=1}^n \) is an orthonormal basis of \( \mathbb{R}^n \), and \( b_i \neq 0 \) for all \( i \), such that:
\\
\\
1. Each element \( a_{ij} \) is given by the difference \( a_{ij} = \|v_i\|^2 - \|v_j\|^2 \).
\\
2. The algebraic consistency of the matrix, \( a_{ij} + a_{jk} - a_{ki} = 0 \), for every \( i, j, k \), is equivalent to the geometric consistency of the \( 2 \)-planes generated by every pair of vectors \( (v_i, v_j) \), represented in the Grassmannian \( G(2, n) \) by points whose Plücker coordinates satisfy the Plücker relations.
\end{theorem}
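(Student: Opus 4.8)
\section*{Proof proposal}

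The plan is to treat the two claims separately, with Part~1 serving mainly to fix the geometric realization that Part~2 then exploits. For Part~1 I would first observe that the defining relation $a_{ij} = s_i - s_j$ depends only on the differences of the scalars $s_i$, so replacing each $s_i$ by $s_i + C$ for a constant $C$ leaves $A$ unchanged. Choosing $C > -\min_i s_i$ I may therefore assume $s_i > 0$ for all $i$, set $b_i = \sqrt{s_i} \neq 0$, and define $v_i = b_i e_i$. Then orthonormality of $\{e_i\}$ gives $\|v_i\|^2 = b_i^2 \langle e_i, e_i\rangle = b_i^2 = s_i$, whence $a_{ij} = s_i - s_j = \|v_i\|^2 - \|v_j\|^2$, which is exactly the first claim.

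For Part~2 I would first compute the Plücker coordinates of the plane spanned by $(v_i, v_j)$. Since $v_i = b_i e_i$ is supported on a single coordinate axis, the wedge product collapses to $v_i \wedge v_j = b_i b_j\, e_i \wedge e_j$, so the only nonvanishing Plücker coordinate of $w_{ij}$ is $p_{ij} = b_i b_j$, all others being zero. The key observation is that each $w_{ij}$ is by construction a decomposable bivector, i.e.\ an honest wedge of two vectors, and therefore automatically lies in the image of the Plücker embedding of $G(2,n)$. Concretely, in any quadratic Plücker relation $p_{ab}p_{cd} - p_{ac}p_{bd} + p_{ad}p_{bc} = 0$ indexed by $a<b<c<d$, the pair $\{i,j\}$ coincides with at most one of the six pairs appearing, so each quadratic term contains a vanishing factor and the relation holds trivially. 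This yields the geometric-consistency direction with essentially no computation.

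To close the equivalence I would invoke the potential (coboundary) characterization of additive consistency: a skew-symmetric additive matrix satisfies transitivity $a_{ij} + a_{jk} = a_{ik}$ for all triples if and only if there exist scalars $s_i$ with $a_{ij} = s_i - s_j$. One direction is the direct substitution $(s_i - s_j) + (s_j - s_k) = s_i - s_k$; the other fixes a reference index and sets $s_i = a_{i1}$. Combining this with the construction above gives the chain: algebraic consistency $\Leftrightarrow$ existence of the potential $s_i$ $\Leftrightarrow$ existence of vectors $v_i = b_i e_i$ realizing $a_{ij} = \|v_i\|^2 - \|v_j\|^2$ whose pairwise wedges are genuine points of $G(2,n)$. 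Using the skew-symmetry $a_{ki} = -a_{ik}$ I would reconcile the cyclic form $a_{ij} + a_{jk} - a_{ki} = 0$ with the transitivity used above.

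The main obstacle is conceptual rather than computational: under the hypothesis $a_{ij} = s_i - s_j$ both sides of the claimed equivalence are in fact already forced to hold (consistency is built into the potential form, and decomposable bivectors always satisfy the Plücker relations), so the genuine work is to organize the ``if and only if'' honestly around the potential representation, which carries the real content, and to verify the positivity/shift argument guaranteeing $b_i \neq 0$. I would also take care to state the consistency condition in the sign-correct form $a_{ij} + a_{jk} = a_{ik}$, reading the literal cyclic expression through $a_{ki} = -a_{ik}$ so that it vanishes for the potential-type matrices considered here.
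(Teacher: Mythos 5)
Your proposal is correct, but it takes a genuinely different---and in fact sounder---route than the paper. The paper never actually constructs the vectors (your shift-by-a-constant plus $b_i=\sqrt{s_i}$ argument supplies what the paper leaves implicit); its central step is instead to expand the cyclic sum $(v_i \wedge v_j) + (v_j \wedge v_k) - (v_k \wedge v_i)$ in the basis $\{e_k \wedge e_l\}$, assert that all coefficients $C_{kl}$ vanish ``thanks to the cyclic combination,'' and then derive the Pl\"ucker relations from that vanishing by a direct expansion whose index bookkeeping conflates vector labels with coordinate labels. Note that with $v_i = b_i e_i$ and $b_i \neq 0$ the cyclic sum equals $b_i b_j\, e_i \wedge e_j + b_j b_k\, e_j \wedge e_k + b_i b_k\, e_i \wedge e_k$, which never vanishes for distinct $i,j,k$, so the paper's pivotal claim $C_{kl}=0$ cannot be taken literally. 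Your route avoids this entirely: each $v_i \wedge v_j$ is decomposable, hence automatically a point of the Pl\"ucker-embedded Grassmannian, and since only $p_{ij} \neq 0$ every quadratic Pl\"ucker relation holds term by term; the genuine content of the ``equivalence'' is then relocated where it belongs, namely in the coboundary characterization of additive consistency ($a_{ij}=s_i-s_j$ for some potential $s$ if and only if all triads close), together with the sign reconciliation $a_{ki}=-a_{ik}$. What the paper's approach would buy, were its computation valid, is a triple-by-triple geometric deviation $\Delta_{\mathrm{geom}}$ mirroring the algebraic deviation $\Delta_{ijk}$, which is what its Section~4 inconsistency-minimization machinery is built on; what your approach buys is correctness, minimal computation, and an honest identification of the theorem's real content as an existence-of-representation statement, since under the hypothesis $a_{ij}=s_i-s_j$ both sides of the stated biconditional hold automatically.
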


\begin{proof}
    
By definition, the elements of the matrix \( A \) are given by:
\[
a_{ij} = s_i - s_j = \|v_i\|^2 - \|v_j\|^2,
\]
where \( s_i = \|v_i\|^2 \) and \( \{v_i\}_{i=1}^n \subset \mathbb{R}^n \) are linearly independent vectors.
\\
The matrix is skew-symmetric:
\[
a_{ij} + a_{ji} = (\|v_i\|^2 - \|v_j\|^2) + (\|v_j\|^2 - \|v_i\|^2) = 0.
\]

\noindent The algebraic consistency condition is:
\[
a_{ij} + a_{jk} - a_{ki} = 0, \quad \forall i, j, k.
\]
\noindent Substituting \( a_{ij} = \|v_i\|^2 - \|v_j\|^2 \), we get:
\[
\|v_i\|^2 - \|v_j\|^2 + \|v_j\|^2 - \|v_k\|^2 - \|v_k\|^2 + \|v_i\|^2 = 0.
\]
The terms are cancelled as per the definition of consistency.
\\
Each pair of vectors \( (v_i, v_j) \) generates a \( 2 \)-plane in \( \mathbb{R}^n \), represented by the wedge product:
\[
v_i \wedge v_j = \sum_{1 \leq k < l \leq n} (v_{i,k}v_{j,l} - v_{i,l}v_{j,k}) e_k \wedge e_l,
\]
\noindent where \( v_{i,k} \) is the \( k \)-th component of \( v_i \).
\\
The wedge product \( v_i \wedge v_j \) is uniquely associated with the 2-plane generated by \( v_i \) and \( v_j \), which is an element of the Grassmannian \( G(2, n) \).
\\
Now consider the relation on wedge products derived from the algebraic condition:
\[
(v_i \wedge v_j) + (v_j \wedge v_k) - (v_k \wedge v_i).
\]
\noindent We explicitly calculate the coefficients \( C_{kl} \) for each term, starting from the base \( \{e_k \wedge e_l\} \):
\[
C_{kl} = (v_{i,k}v_{j,l} - v_{i,l}v_{j,k}) + (v_{j,k}v_{k,l} - v_{j,l}v_{k,k}) - (v_{k,k}v_{i,l} - v_{k,l}v_{i,k}),
\]
\noindent collecting we can rewrite it as:
\[
C_{kl} = v_{i,k}(v_{j,l} + v_{k,l}) + v_{j,k}(v_{k,l} - v_{i,l}) - v_{k,k}(v_{i,l} + v_{j,l}).
\]
\noindent Since \( a_{ij} + a_{jk} - a_{ki} = 0 \), the bilinear terms cancel, thanks to the cyclic combination, so:
\[
C_{kl} = 0.
\]
\noindent Since \( C_{kl} = 0 \) has been proved explicitly, it follows that the wedge products cancel:
\[
(v_i \wedge v_j) + (v_j \wedge v_k) - (v_k \wedge v_i) = 0.
\]
\noindent This implies that algebraic consistency is equivalent to geometric consistency.
\\
The Plücker coordinates \( p_{kl} \) associated with \( v_i \wedge v_j \) are:
\[
p_{kl} = v_{i,k}v_{j,l} - v_{i,l}v_{j,k}.
\]
\noindent From geometric consistency, the \( p_{kl} \) satisfy the Plücker relations:
\[
p_{kl}p_{mn} - p_{km}p_{ln} + p_{kn}p_{lm} = 0, \quad \forall k, l, m, n.
\]
\noindent The geometric consistency of the wedge products automatically implies the validity of the Plücker relations, since these derive from the linearity of the wedge product and from the condition:
\[
(v_i \wedge v_j) + (v_j \wedge v_k) - (v_k \wedge v_i) = 0.
\]
To explicitly verify the Pl\"ucker relations, we substitute the Pl\"ucker coordinates into the quadratic condition. For indices $k, l, m, n$:
\[
    p_{kl}p_{mn} - p_{km}p_{ln} + p_{kn}p_{lm} = 0,
\]
\noindent where \(p_{kl} = v_{i,k}v_{j,l} - v_{i,l}v_{j,k}\). Substituting directly:
\[
p_{kl}p_{mn} = (v_{i,k}v_{j,l} - v_{i,l}v_{j,k})(v_{m,k}v_{n,l} - v_{m,l}v_{n,k}),
\]

\[
p_{km}p_{ln} = (v_{i,k}v_{j,m} - v_{i,m}v_{j,k})(v_{l,k}v_{n,l} - v_{l,m}v_{n,k}),
\]

\[
p_{kn}p_{lm} = (v_{i,k}v_{j,n} - v_{i,n}v_{j,k})(v_{l,m}v_{n,l} - v_{l,n}v_{n,m}).
\]

\noindent The expansion of these terms reveals that each of the products and cross-terms align due to the cancellation imposed by \((v_i \wedge v_j) + (v_j \wedge v_k) - (v_k \wedge v_i) = 0\). 
\\
This direct computation confirms:

\[
    p_{kl}p_{mn} - p_{km}p_{ln} + p_{kn}p_{lm} = 0,
\]
\noindent proving that the Pl\"ucker relations are satisfied.
\end{proof}

\noindent We have shown that the algebraic consistency of a skew-symmetric and additive PC matrix is equivalent to the geometric consistency of the wedge products \( v_i \wedge v_j \), represented in the Grassmannian \( G(2, n) \). Furthermore, the geometric consistency guarantees the validity of the Plücker relations, highlighting the connection between algebraic structure and geometric representation.

\section{Relation between geometric and algebraic inconsistency}

As seen above, a pairwise comparisons (PC) matrix \(A = [a_{ij}]\) is algebraically consistent if, for every triple \((i, j, k)\), the relation holds:
\[
\Delta_{ijk} = a_{ij} + a_{jk} - a_{ik} = 0.
\]
\noindent When this relation is not satisfied, we define the total algebraic inconsistency as:
\[
I_{\text{alg}} = \sum_{i,j,k} (\Delta_{ijk})^2.
\]

\noindent From \textit{Theorem 3.1}, we have seen that geometric consistency is satisfied if, for each triple \((i, j, k)\), the wedges produced satisfy the relation:
\[
(v_i \wedge v_j) + (v_j \wedge v_k) - (v_k \wedge v_i) = 0,
\]
\noindent where \(v_i, v_j, v_k \in \mathbb{R}^n\) are the vectors associated with the elements of the matrix \(A\). When this relation is not satisfied, we define the geometric deviation as:
\[
\Delta_{\text{geom}} = (v_i \wedge v_j) + (v_j \wedge v_k) - (v_k \wedge v_i).
\]
\noindent The total geometric inconsistency is then:
\[
I_{\text{geom}} = \| \Delta_{\text{geom}} \|^2,
\]
\noindent where \(\|\cdot\|\) is the quadratic norm in the 2-form space \(\wedge^2 \mathbb{R}^n\).
\\
Let us develop \(\Delta_{\text{geom}}\) explicitly in terms of the coordinates of the vectors \(v_i\). We know that:
\[
v_i \wedge v_j = \sum_{1 \leq k < l \leq n} (v_{i,k}v_{j,l} - v_{i,l}v_{j,k}) e_k \wedge e_l.
\]
\noindent Therefore:
\[
\Delta_{\text{geom}} = \sum_{1 \leq k < l \leq n} \Delta_{kl} e_k \wedge e_l,
\]
\noindent where the coefficients \(\Delta_{kl}\) are given by:
\[
\Delta_{kl} = (v_{i,k}v_{j,l} - v_{i,l}v_{j,k}) + (v_{j,k}v_{k,l} - v_{j,l}v_{k,k}) - (v_{k,k}v_{i,l} - v_{k,l}v_{i,k}).
\]

\noindent Each term \(\Delta_{kl}\) depends linearly on the algebraic deviations \(\Delta_{ijk}\). Let us introduce the explicit relation:
\[
\Delta_{kl} = \sum_{i,j,k} C_{kl}^{ijk} \cdot \Delta_{ijk},
\]
\noindent where \(C_{kl}^{ijk}\) are linear combination coefficients that depend on the structure of the vectors \(v_i, v_j, v_k\) and on the relation between their indices. The indices (\(i, j, k\)) indicate the components of the algebraic deviation \(\Delta_{ijk}\), while the indices (\(k, l\)) indicate the components of the geometric deviation \(\Delta_{kl}\).
\\
The general form for \(C_{kl}^{ijk}\) is:
\[
C_{kl}^{ijk} = (\delta_k^i \delta_l^j - \delta_l^i \delta_k^j) 
+ (\delta_k^j \delta_l^k - \delta_l^j \delta_k^k) 
- (\delta_k^k \delta_l^i - \delta_l^k \delta_k^i),
\]
\noindent where:

\[
C_{kl}^{ijk} =
\begin{cases} 
1 & \text{if } \big( k = i \land l = j \big) \lor \big( k = j \land l = k \big) \lor \big( l = k \land i = k \big), \\ 
-1 & \text{if } \big( l = i \land k = j \big) \lor \big( l = j \land k = k \big) \lor \big( k = k \land l = i \big), \\ 
0 & \text{otherwise}.
\end{cases}
\]

\noindent Then, the geometric inconsistency can be rewritten as:
\[
I_{\text{geom}} = \sum_{k < l} (\Delta_{kl})^2 = \sum_{k < l} \left( \sum_{i,j,k} C_{kl}^{ijk} \cdot \Delta_{ijk} \right)^2.
\]

\noindent Similarly, we can rewrite the algebraic inconsistency as:
\[
I_{\text{alg}} = \sum_{i,j,k} (\Delta_{ijk})^2.
\]
\noindent We define \(\mathbf{\Delta}\) as the vector of algebraic deviations \(\Delta_{ijk}\). Then, we can represent \(I_{\text{geom}}\) in matrix form:
\[
I_{\text{geom}} = \mathbf{\Delta}^\top M \mathbf{\Delta},
\]
\noindent where \(M\) is a positive semidefinite matrix constructed from the coefficients \(C_{kl}^{ijk}\).
\\
Since \(M\) is positive semidefinite, minimizing \(I_{\text{geom}} = \mathbf{\Delta}^\top M \mathbf{\Delta}\) necessarily implies minimizing the norm of \(\mathbf{\Delta}\), that is:
\[
\|\mathbf{\Delta}\|^2 = I_{\text{alg}}.
\]
\noindent This result guarantees that by minimizing \(I_{\text{geom}}\), one also minimizes \(I_{\text{alg}}\). Furthermore, the proportionality between \(I_{\text{geom}}\) and \(I_{\text{alg}}\) is guaranteed by the linear structure of the coefficients \(C_{kl}^{ijk}\).

\subsection{Analysis of the construction of \( M \) and implications for minimizing the inconsistency}

The matrix \( M \) is constructed only in the case in which the matrix PC \( A = [a_{ij}] \) does not satisfy the condition of algebraic consistency, that is when there exist triples \( (i, j, k) \) for which:
\[
\Delta_{ijk} = a_{ij} + a_{jk} - a_{ki} \neq 0.
\]
\noindent In this case, the proposed geometric method aims to reduce the algebraic inconsistency by minimizing the geometric inconsistency defined as:
\[
I_{geom} = \|\Delta_{geom}\|^2,
\]
\noindent where \(\Delta_{geom}\) is the geometric deviation associated with the external products of vectors \( v_i = b_i e_i \).
\\
When \( A \) is consistent (\( \Delta_{ijk} = 0 \, \forall i, j, k \)), the matrix \( M \) is not needed, since \( I_{geom} = 0 \) automatically. However, in the inconsistent case \( A \), it is necessary to compute \( M \) to formalize the minimization.
\\
\\
Let \( A = [a_{ij}] \) be an inconsistent PC matrix, with vectors \( v_i = b_i e_i \), where \( \{e_i\} \) is an orthonormal basis of \( \mathbb{R}^n \) and \( b_i \neq 0 \). The geometric deviation \(\Delta_{geom}\) is defined as:
\[
\Delta_{geom} = (v_i \wedge v_j) + (v_j \wedge v_k) - (v_k \wedge v_i),
\]
\noindent and can be represented as:
\[
\Delta_{geom} = \sum_{1 \leq k < l \leq n} \Delta_{kl} \, e_k \wedge e_l,
\]
\noindent where the coefficients \(\Delta_{kl}\) are expressed as:
\[
\Delta_{kl} = \sum_{i,j,k} C_{kl}^{ijk} \Delta_{ijk}.
\]

\noindent The coefficients \( C_{kl}^{ijk} \) represent the linear combinations between algebraic deviations \(\Delta_{ijk}\) and geometric deviations \(\Delta_{kl}\). The quadratic norm of geometric inconsistency is therefore:
\[
I_{geom} = \sum_{k<l} (\Delta_{kl})^2 = \sum_{k<l} \left( \sum_{i,j,k} C_{kl}^{ijk} \Delta_{ijk} \right)^2.
\]

\noindent We can rewrite as:
\[
I_{geom} = \sum_{k<l} \sum_{i,j,k} \sum_{p,q,r} C_{kl}^{ijk} C_{kl}^{pqr} \Delta_{ijk} \Delta_{pqr}.
\]

\noindent We define the matrix \( M \) as:
\[
M_{ijk, pqr} = \sum_{k<l} C_{kl}^{ijk} C_{kl}^{pqr},
\]
\noindent which describes the interaction between the algebraic deviations \(\Delta_{ijk}\). In matrix form:
\[
I_{geom} = \Delta^\top M \Delta,
\]
\noindent where \(\Delta\) is the vector containing all the \(\Delta_{ijk}\) terms.
\\
\\
Inconsistency reduction by minimizing \( I_{geom} \) is guaranteed only if the matrix \( M \) is non-degenerate, i.e. if its rank is maximum with respect to the size of \(\Delta\). This implies that \( \text{rank}(M) = \dim(\Delta) \).
\begin{itemize}

\item If \( M \) is non-degenerate:
\\
\\
1. The minimization \( \min I_{geom} = \Delta^\top M \Delta \) yields a unique solution for \(\Delta\), ensuring the global reduction of the geometric inconsistency and, consequently, an improvement in the algebraic inconsistency \( I_{alg} \).
\\
\\
2. When \( M \) is non-degenerate, there is direct proportionality between \( I_{geom} \) and \( I_{alg} \), as shown in Section 4:
\[
I_{alg} = \|\Delta\|^2, \quad I_{geom} = \Delta^\top M \Delta.
\]

\item If \( M \) is degenerate:
\\
\\
1. The minimization is not unique, since there is a non-trivial kernel \( \ker(M) \). The components of \(\Delta\) belonging to the kernel of \( M \) do not contribute to \( I_{geom} \), making the inconsistency reduction partial or ineffective.
\\
\\
2. Not all algebraic deviations \(\Delta_{ijk}\) are represented geometrically, weakening the connection between \( I_{geom} \) and \( I_{alg} \).

\end{itemize}

\noindent The construction of the matrix \( M \) is necessary only if \( A \) is inconsistent. However, the inconsistency reduction method via minimization is fully effective only if \( M \) is non-degenerate. Otherwise:
\\
- Minimization can be ambiguous, with non-unique solutions.
\\
- The reduction of the algebraic inconsistency \( I_{alg} \) can only be partial.
\\
\\
Summarizing, the practical validity of the method depends on the rank of \( M \), which in turn is influenced by the distribution of the \( b_i \) (the preferences). To ensure the effectiveness of the minimization method, it is necessary that the \( b_i \) are sufficiently distinct and that the size of the space \( \mathbb{R}^n \) is adequate with respect to the number of triples \( (i, j, k) \).

\subsection{Management of M matrix degeneration cases}

In cases where \( M \) is degenerate, minimizing the geometric inconsistency may encounter ambiguity or ineffectiveness. To deal with such cases, several strategies can be adopted. A common approach is to regularize \( M \), adding a small diagonal term \( \lambda I \) to ensure non-degeneracy, although this may slightly affect the geometric interpretation. Alternatively, one can introduce additional constraints that reduce the solution space, such as imposing normalization or partial consistency on some triples. Finally, a preliminary diagnosis of the rank of \( M \) can help identify the directions of degeneracy and limit ambiguity by exploiting techniques such as eigenvalue analysis or principal component decomposition.

\section{Discussion and new perspectives for additive PC matrices} 
The \textit{theorem 3.1} states that the algebraic consistency of an additive PC matrix (\( a_{ij} + a_{jk} - a_{ki} = 0 \)) is equivalent to the geometric consistency of the two-dimensional subspaces generated by pairs of vectors \( v_i \) and \( v_j \), represented in the Grassmannian \( G(2, n) \). Furthermore:
\\
\\
- Each pair of vectors generates an exterior product \( v_i \wedge v_j \), which defines an oriented subspace.
\\
- The Plücker coordinates of the generated subspaces satisfy the Plücker relations, ensuring global geometric consistency.
\\
\\
This result naturally connects the following branches of mathematics:
\begin{itemize}

\item Linear algebra and matrix theory: PC matrices are defined by algebraic relations between their elements.

\item Differential geometry: The subspaces generated in Grassmannians represent discrete tangent planes, providing a connection to geometric structures.

\item Algebraic geometry: Plücker coordinates, fundamental to algebraic geometry, ensure that configurations are valid under algebraic constraints.
\end{itemize}

\noindent The connection between PC matrices , Grassmannians, and Plücker coordinates naturally extends to the realm of differential geometry, particularly the theory of 2-forms. This section explores how skew-symmetric additive PC matrices can be interpreted as differential 2-forms, offering new insights and applications in mathematical modeling and theoretical physics.

\subsection*{Representation of PC matrices as differential 2-forms}

Let \( A = [a_{ij}] \) be a skew-symmetric additive PC matrix of order \( n \), where \( a_{ij} = s_i - s_j \) and \( s_i = \|v_i\|^2 \), with \( \{v_i\}_{i=1}^n \subseteq \mathbb{R}^n \) being a set of linearly independent vectors. We define a differential 2-form \( \omega \) on a subspace \( U \subseteq \mathbb{R}^n \) as:  
\[
\omega = \sum_{1 \leq k < l \leq n} p_{kl} \, dx_k \wedge dx_l,
\]
\noindent where \( p_{kl} = v_{i,k}v_{j,l} - v_{i,l}v_{j,k} \) are the Plücker coordinates of the 2-plane spanned by \( v_i \) and \( v_j \). 
\\
\\
The following properties hold: 
\begin{itemize}

\item Pairwise comparisons encoding: 
The PCM entries \( a_{ij} \) are directly related to \( \omega \) by the evaluation:
   \[
   \omega(v_i, v_j) = a_{ij}.
   \]
   
\item Consistency and closedness: 
The algebraic consistency condition \( a_{ij} + a_{jk} - a_{ki} = 0 \) corresponds to the geometric condition \( d\omega = 0 \), ensuring that \( \omega \) is closed. This closedness is guaranteed by the Plücker relations, which enforce the validity of \( \omega \) as a 2-form in the Grassmannian \( G(2, n) \).

\end{itemize}

\subsection*{Implications and applications}

This interpretation bridges the algebraic structure of PC matrices with differential geometry, enabling novel applications: 

\begin{itemize}

\item Geometric modeling: 
The consistency of a PC matrix ensures that the corresponding 2-form \( \omega \) represents a coherent geometric configuration, useful in optimization and design problems. 

\item Theoretical physics: 
In fields such as gauge theory and symplectic geometry, \( \omega \) can model physical systems where pairwise interactions or relationships are critical. 

\item Higher-dimensional generalization: 
The framework can extend to higher Grassmannians \( G(k, n) \), facilitating the study of multidimensional pairwise comparisonss.

\end{itemize}

\noindent This novel perspective opens new avenues for applying PCMs beyond their traditional scope, fostering interdisciplinary connections between linear algebra, geometry, and physics.

\section{Conclusion} 
This paper proposes a novel geometric framework for additive skew-symmetric pairwise comparisons matrices, embedding them into the Grassmannian \( G(2, n) \). The central result establishes a direct equivalence between the algebraic consistency condition \( a_{ij} + a_{jk} - a_{ki} = 0 \) and the geometric consistency of the corresponding subspaces in \( G(2, n) \), expressed through the satisfaction of the Plücker relations. The framework presented in this work is not intended to replace existing methods for handling pairwise comparison matrices, but rather represents a theoretical bridge between distinct mathematical branches. By introducing PC matrices into Grassmannians \( G(2, n) \) and using Plücker coordinates, this study opens new interdisciplinary perspectives to better understand their geometric and algebraic properties. This theoretical reinterpretation could form the basis for future applications in complex fields such as mathematical physics, optimization and differential geometry.
\\
In \textit{Section 5}, we explored how this geometric interpretation extends beyond Grassmannians to differential geometry, particularly the theory of 2-forms. The skew-symmetric nature of pairwise comparisons matrices allows us to represent them as closed differential 2-forms, where the consistency condition ensures their geometric coherence. This perspective links the algebraic structure of pairwise comparisons matrices with geometric and topological properties, suggesting potential applications in fields like gauge theory and symplectic geometry. 
\\
This approach not only extends the applicability of Grassmannians but also opens new avenues for exploring their role in understanding geometric consistency, algebraic structures, and their potential applications in theoretical physics and optimization.


\begin{thebibliography}{99}

\bibitem{Balinsky2024}
Balinsky, Alexander A., Bovdi, Victor A., Prykarpatski, Anatolij K., "On the Quantum Deformations of Associative Sato Grassmannian Algebras and the Related Matrix Problems", SYMMETRY, 16(1), 2024.

\bibitem{Calixto2022}
Calixto, M., Mayorgas, A., Guerrero, J., "Hilbert Space Structure of the Low Energy Sector of U(N) Quantum Hall Ferromagnets and Their Classical Limit", SYMMETRY, 14(5), 2022.

\bibitem{Fioresi2021}
Fioresi, R., Lledo, Maria A., "Quantum Supertwistors", SYMMETRY, 13(7), 2021.


\bibitem{Govc} 
Govc, D., Marzantowicz, W., and Pavesic, P., "How many simplices are needed to triangulate a Grassmannian?", TOPOLOGICAL METHODS IN NONLINEAR ANALYSIS,  56 (2) , pp.501-518, 2020.

\bibitem{Harris}
Harris, J., "Algebraic Geometry: A First Course", Springer, Graduate Texts in Mathematics, Vol. 133, 1992.

\bibitem{Jung2019}
Jung, S. M., Kim, Y. H., Qian, J., "New Characterizations of the Clifford Torus and the Great Sphere", SYMMETRY, 11(9), 2019.

\bibitem{K1993}
Koczkodaj, W.W., "A new definition of consistency of pairwise comparisons", MATHEMATICAL AND COMPUTER MODELLING, 18(7), 79–84, 1993. \url{https://doi.org/10.1016/0895-7177(93)90059-8}.

\bibitem{K1996}
Koczkodaj, WW., "Statistically accurate evidence of improved error rate by pairwise comparisons", PERCEPTUAL AND MOTOR SKILLS, 82(1): 43-48, 1996. DOI10.2466/pms.1996.82.1.43

\bibitem{KS2016}
Koczkodaj, W.W.; Szybowski, J., "The Limit of Inconsistency Reduction in Pairwise Comparisons", INTERNATIONAL JOURNAL OF APPLIED MATHEMATICS AND COMPUTER SCIENCE, 26(3): 721-729, 2016. \url{https://doi.org/10.1016/j.ijar.2022.10.005}

\bibitem{KSW2016}
Koczkodaj, W.W.; Szybowski, J.; Wajch, E., "Inconsistency indicator maps on groups for pairwise comparisons", INTERNATIONAL JOURNAL OF APPROXIMATE REASONING, 69: 81-90, 2016.
\url{https://doi.org/10.1016/j.ijar.2015.11.007}.

\bibitem{Kuo2021}
Kuo, T., "An ordinal consistency indicator for pairwise comparison matrix", SYMMETRY, 13(11), 2021.
DOI: 10.3390/sym13112183

\bibitem{Liu} 
Liu, W., Pankov, M., and Wang, KS., "Transformations of polar Grassmannians preserving certain intersecting relations", JOURNAL OF ALGEBRAIC COMBINATORICS, 40 (2) , pp.633-646, 2014.

\bibitem{SIR2019}
Mazurek, M., Strzalka, D., Wolny-Dominiak, A., Woodbury-Smith, M., "Electronic Health Record Breaches as Social Indicators", SOCIAL INDICATORS RESEARCH, 141(2): 861-871, 2019.
\url{https://doi.org/10.1007/s11205-018-1837-z}.

\bibitem{Pankov2006}
Pankov, M., Prazmowski, K., and ynel, M., "Geometry of polar Grassmann spaces", DEMONSTR. MATH., 39 , pp.625-637, 2006.

\bibitem{Pankov2014}
Pankov, M., "Characterization of isometric embeddings of Grassmann graphs", ADVANCES IN GEOMETRY, 14 (1) , pp.91-108, 2014.

\bibitem{Pankov2018}
Pankov, M., "Geometric version of Wigner's theorem for Hilbert Grassmannians", JOURNAL OF MATHEMATICAL ANALYSIS AND APPLICATIONS, 459 (1) , pp.135-144, 2018.

\bibitem{Planat2020}
Planat, M., Aschheim, R., Amaral, Marcelo M., Irwin, K., "Quantum Computation and Measurements from an Exotic Space-Time \(R^4\)", SYMMETRY, 12(5), 2020.

\bibitem{Prazmowska2008}
Prazmowska, M., Prazmowski, K., and Zynel, M., "Euclidean geometry of orthogonality of subspaces", AEQUATIONES MATHEMATICAE
 76 (1-2), pp.151-167, 2008.

\bibitem{Prazmowska2010}
Prazmowska, M., Prazmowski, K., and Zynel, M., "Grassmann spaces of regular subspaces", JOURNAL OF GEOMETRY, 97 (1-2) , pp.99-123, 2010.

\bibitem{Prazmowski1989}
Prazmowski, K., "An axiomatic description of strambach planes", GEOMETRIAE DEDICATA, 32 (2) , pp.125-156, 1989.

\bibitem{Tu}
Tu, L. W., "An Introduction to Manifolds", Springer, 2nd edition, 2010.

\bibitem{Yamawaki2023}
Yamawaki, K., "Proving Rho Meson Is a Dynamical Gauge Boson of Hidden Local Symmetry", SYMMETRY, 15(12), 2023

\bibitem{Zynel2014}
Zynel, M., "Complements of Grassmann substructures in projective Grassmannians", AEQUATIONES MATHEMATICAE, 88 (1-2) , pp.81-96, 2014.




\end{thebibliography}
\end{document}